\documentclass{amsart}

\usepackage{amsmath,amssymb,amsthm,amsfonts}
\usepackage{fullpage}
\usepackage{hyperref}
\hypersetup{colorlinks=true, linkcolor=blue, citecolor=blue, urlcolor=blue}
\usepackage{enumitem}
\usepackage{mathrsfs}

\newtheorem{theorem}{Theorem}
\newtheorem{conjecture}{Conjecture}
\newtheorem{proposition}{Proposition}
\newtheorem{lemma}{Lemma}
\newtheorem{corollary}{Corollary}
\newtheorem{problem}{Problem}
\theoremstyle{definition}
\newtheorem{definition}{Definition}
\newtheorem{remark}{Remark}

\newcommand{\C}{\mathbb C}
\newcommand{\Spec}{\operatorname{Spec}}
\newcommand{\tr}{\operatorname{tr}}

\title{The $(n-2,2)$-Spectrum of a Graph}
\author{B. Shapiro}
\address{Department of Mathematics, Stockholm University, SE-106 91 Stockholm,
      Sweden}
\email{shapiro@math.su.se}
\date{}
\subjclass[2020]{05C50, 05C05, 20C30, 05E10, 13A50}
\keywords{spectral graph theory, symmetric group, Cayley graph, interchange process, graph isomorphism, tree reconstruction, cospectral graphs, invariant theory}

\begin{document}

\begin{abstract}
We study a representation-theoretic refinement of the ordinary
Laplacian spectrum of a graph.  Given a graph $G$ on $n$ vertices,
one may associate to it the element
\[
X_G=\sum_{ij\in E(G)} (ij)\in \C[S_n].
\]
The action of $X_G$ in irreducible representations of $S_n$ produces
spectral invariants of graphs.  The standard representation $(n-1,1)$
recovers the ordinary graph Laplacian spectrum, up to the elementary
affine change $X_G=mI-L_G$, where $m=|E(G)|$.  The next component,
$(n-2,2)$, gives the first representation-theoretic correction.  We give
an explicit edge-space model for this component, derive a concrete
coordinate formula for the induced operator, and prove a universal
support-subgraph expansion for all trace moments of arbitrary graphs; for
trees this specializes to universal linear combinations of support-forest
counts.  The first three moments are computed explicitly.  A new reduction
of the cubic formula shows that, once the Laplacian spectrum is fixed, the
third $(n-2,2)$-moment determines $r(G)+7t(G)$, where $r(G)$ is the number of
three-edge paths and $t(G)$ the number of triangles.  Hence for trees the
combined Laplacian and $(n-2,2)$ spectra determine every three-edge support
forest count separately.  We also introduce weighted trace polynomials.  The
weighted quadratic trace reconstructs every connected simple graph on
$n\geq4$ vertices except at $n=7$, while the weighted quartic trace does so
for every $n\geq4$.  Thus, before diagonal specialization of the edge
weights, a single fourth trace already has full reconstructive power in the
connected case.  Finally we discuss the relation with the
invariant-theoretic approach of Thi\'ery \cite{Thiery} and formulate a
support-forest-profile conjecture for the unweighted tree-isomorphism
problem.
\end{abstract}

\maketitle

\section{Introduction}

Spectral graph theory traditionally studies eigenvalues of matrices
naturally associated with a graph, such as adjacency matrices, graph
Laplacians, signless Laplacians and normalized Laplacians; see, for example,
\cite{GodsilRoyle,VanDamHaemers}.  These
operators act on functions on the vertex set.  Representation theory of
$S_n$ suggests a natural hierarchy of spectral refinements; see, for instance,
\cite{Diaconis} for the general representation-theoretic viewpoint.

Let $G$ be a graph on the vertex set $[n]=\{1,\ldots,n\}$.  Unless otherwise stated, we assume $n\geq 4$ when referring to the partition $(n-2,2)$.  Put
\[
X_G=\sum_{ij\in E(G)}(ij)\in \C[S_n].
\]
For every irreducible representation
$\rho_\lambda:S_n\to GL(V_\lambda)$ one obtains an operator
\[
X_G^\lambda:=\rho_\lambda(X_G)=\sum_{ij\in E(G)}\rho_\lambda((ij)).
\]
The spectrum of $X_G^\lambda$ is an isomorphism invariant of $G$.
The same group-algebra element, or equivalently the shifted Laplacian
$mI-X_G$, appears in the spectral theory of Cayley graphs of $S_n$ generated
by transpositions and in the interchange process; see, for example,
\cite{Cesi,CaputoLiggettRichthammer}.  Here we use this construction in a
different direction, namely as a hierarchy of graph invariants obtained by
looking at individual irreducible components.

The case $\lambda=(n-1,1)$ is the ordinary Laplacian spectrum in
disguise.  Thus the first genuinely new irreducible component is
\[
\lambda=(n-2,2).
\]
This note studies the corresponding graph invariant.
The main new point beyond the basic construction is a general character
formula for all trace moments.  For an arbitrary graph it gives a universal
expansion in counts of embedded support subgraphs, and for trees these
supports are forests.  The explicit first three moments are then obtained as
examples.  The first two are comparatively coarse.  The cubic moment admits
a stronger interpretation: after the Laplacian spectral data are removed,
it contains precisely one new three-edge combination, namely the number of
three-edge paths plus seven times the number of triangles.  In particular,
for trees the cubic moment determines the number of three-edge paths
exactly.

\subsection*{Main contributions}

The results of the paper may be summarized as follows.
\begin{enumerate}[label=\textup{(\roman*)}]
\item We identify the $(n-2,2)$-space with the zero-degree edge space.
\item We write the operator $X_G$ on this space explicitly in edge
coordinates.
\item We give a closed character-sum formula for every trace moment and,
for arbitrary graphs, a universal expansion in embedded support-subgraph
counts.  For trees this becomes the support-forest expansion.
\item We introduce weighted trace polynomials and prove a weighted
support-subgraph expansion for arbitrary graphs.
\item The weighted quadratic trace determines the line graph whenever
$n\neq7$, while the weighted quartic trace determines it for every $n\geq4$.
Consequently the quadratic trace reconstructs every connected simple graph
for $n\neq7$, and the quartic trace reconstructs every connected simple
graph for all $n\geq4$.
\item We compute the first three unweighted trace moments explicitly and
show that the Laplacian spectrum together with the cubic $(n-2,2)$-moment
determines $r(G)+7t(G)$.  For trees this recovers $r(T)$ itself and hence all
three-edge support-forest counts separately.
\end{enumerate}

\section{The graph representation}

Let
\[
V_n=\C^{\binom n2}
\]
be the edge space of the complete graph $K_n$, with basis $e_{ij}$ indexed
by unordered pairs $ij\subset [n]$.  The symmetric group $S_n$ acts by
permuting vertices:
\[
\sigma e_{ij}=e_{\sigma(i)\sigma(j)}.
\]
The classical decomposition of this permutation representation is
\cite{Sagan}
\[
V_n\cong (n)\oplus(n-1,1)\oplus(n-2,2).            \tag{2.1}
\]
The trivial representation is generated by the complete graph vector
\[
\mathbf 1=\sum_{i<j}e_{ij}.
\]
The sum of the first two components is generated by the star vectors
\[
E_i=\sum_{j\ne i}e_{ij},\qquad i=1,\ldots,n.
\]
These $n$ vectors are linearly independent for $n\geq 3$, and their span
is isomorphic to the permutation representation $(n)\oplus(n-1,1)$.
Hence the last summand has dimension
\[
\binom n2-n=\frac{n(n-3)}2.
\]

\begin{remark}
This decomposition is used explicitly by Thi\'ery \cite{Thiery} in his study of the
invariant ring of weighted graphs.  In that setting the component
$(n-2,2)$ is described as the space of weighted graphs whose vertex
degrees are all zero.  The spectral theory of the operator $X_G$ on this
component seems to be a natural refinement of that invariant-theoretic
viewpoint.
\end{remark}

\section{The standard representation and the Laplacian}

Let $A_G$ be the adjacency matrix of $G$, let $D_G$ be the diagonal degree
matrix, and let
\[
L_G=D_G-A_G
\]
be the graph Laplacian.  Denote $m=|E(G)|$.

\begin{proposition}
On the standard representation $(n-1,1)$ one has
\[
X_G^{(n-1,1)}=mI-L_G.
\]
Consequently, for $n\geq 4$, the spectrum of $X_G^{(n-1,1)}$ is equivalent
to the ordinary Laplacian spectrum of $G$.  More generally, this equivalence
holds once the number $m$ of edges is known.
\end{proposition}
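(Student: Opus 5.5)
Model the standard representation inside the permutation representation $\C^n$ with basis $f_1,\dots,f_n$ and $\sigma f_i=f_{\sigma(i)}$. This module splits as $\C\mathbf u\oplus W$, where $\mathbf u=\sum_i f_i$ and $W=\{x:\sum_i x_i=0\}\cong(n-1,1)$. Since $(n-1,1)$ appears exactly once, $\rho_{(n-1,1)}(X_G)$ is, up to a change of basis commuting with the action, the restriction of the permutation-module operator $P(X_G)$ to $W$. It therefore suffices to compute $P(X_G)$ on $\C^n$ and restrict to $W$.

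The transposition $(ij)$ acts on $\C^n$ as $I-(f_i-f_j)(f_i-f_j)^T$: it fixes every $f_k$ with $k\neq i,j$ and swaps $f_i$ with $f_j$. Summing over the edges gives
\[
P(X_G)=mI-\sum_{ij\in E(G)}(f_i-f_j)(f_i-f_j)^T=mI-L_G,
\]
since $\sum_{ij\in E}(f_i-f_j)(f_i-f_j)^T=D_G-A_G$ is the standard incidence decomposition of the Laplacian. Now $L_G\mathbf u=0$ and $L_G$ is symmetric, so $L_G$ preserves $W=\mathbf u^\perp$. Hence $X_G^{(n-1,1)}=(mI-L_G)|_W$, which is the identity of the proposition, read as an operator on $W$.

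For the spectral statement, $L_G$ has eigenvalue $0$ on $\mathbf u$. So the spectrum of $L_G$ on $\C^n$ is the multiset $\{0\}$ together with the eigenvalues $\mu_2,\dots,\mu_n$ of $L_G|_W$, and the spectrum of $X_G^{(n-1,1)}$ is $\{m-\mu_2,\dots,m-\mu_n\}$. If $m$ is known, the Laplacian spectrum and the spectrum of $X_G^{(n-1,1)}$ determine each other. This proves the ``more generally'' clause.

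The remaining step, and the only non-routine one, is to recover $m$ from the spectrum of $X_G^{(n-1,1)}$ alone. Take the trace: $\tr X_G^{(n-1,1)}=m\chi_{(n-1,1)}((12))=m(n-3)$, because the character of $(n-1,1)$ is (number of fixed points) $-1$. For $n\geq4$ we have $n-3\neq0$, so $m=\tr X_G^{(n-1,1)}/(n-3)$ is determined by the spectrum. This is exactly where the hypothesis $n\geq4$ is used; for $n=3$ the trace vanishes and $m$ must be supplied separately. As a consistency check, $\tr(mI-L_G)|_W=m(n-1)-\tr L_G=m(n-1)-2m=m(n-3)$. Combining this with the previous paragraph gives the equivalence of the two spectra for $n\geq4$.
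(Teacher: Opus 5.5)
Your proposal is correct and follows essentially the same route as the paper: realize $X_G$ on the permutation module $\C^n$ as $mI-L_G$ via the rank-one elementary Laplacians $I-(ij)$, pass to the complement of the trivial line, and recover $m$ from $\tr X_G^{(n-1,1)}=m(n-3)$ when $n\geq4$. For the converse direction of the equivalence you only use implicitly (in your consistency check) that $m=\tr(L_G)/2$ is read off from the Laplacian spectrum; it is worth stating this explicitly.
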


\begin{proof}
In the permutation representation on $\C^n$, the transposition $(ij)$
exchanges the basis vectors $e_i,e_j$.  Therefore $I-(ij)$ is the
rank-one elementary Laplacian attached to the edge $ij$.  Summing over
all edges gives
\[
\sum_{ij\in E(G)}(I-(ij))=L_G.
\]
Thus $X_G=mI-L_G$ on $\C^n$.  The trivial line is invariant and carries
$X_G$ by multiplication by $m$; after quotienting by the trivial line, or
restricting to its orthogonal complement, one obtains the standard
representation.  Since the trace of $X_G^{(n-1,1)}$ equals $m(n-3)$, the
integer $m$ is recovered from this spectrum for $n\geq 4$.
\end{proof}

\section{A concrete model for the $(n-2,2)$-component}

The component $(n-2,2)$ has a particularly simple realization inside the
edge space.

\begin{definition}
Let
\[
W_n=\left\{z=(z_{ij})_{i<j}\in V_n:\
        \sum_{j\ne i} z_{ij}=0\text{ for every }i=1,\ldots,n\right\},
\]
where $z_{ij}=z_{ji}$.  We call $W_n$ the zero-degree edge space.
\end{definition}

\begin{proposition}
For $n\geq 4$, $W_n$ is an $S_n$-invariant subspace of $V_n$, and
\[
W_n\cong (n-2,2).
\]
For $n=3$ this space is zero and the component is absent.
Equivalently,
\[
V_n=\operatorname{span}\{E_1,\ldots,E_n\}\oplus W_n.
\]
\end{proposition}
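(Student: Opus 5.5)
The plan is to realize $W_n$ as the orthogonal complement of the star span with respect to the standard $S_n$-invariant inner product on $V_n$ (the one making the $e_{ij}$ orthonormal), and then read off its isomorphism type from the decomposition (2.1).

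\textbf{Invariance.} Take $z\in W_n$ and $\sigma\in S_n$, and put $w=\sigma z$, so that $w_{\sigma(i)\sigma(j)}=z_{ij}$. Then for each vertex $k=\sigma(i)$ we have
\[
\sum_{l\ne k} w_{kl}=\sum_{j\ne i} z_{ij}=0,
\]
so $w\in W_n$ and $W_n$ is $S_n$-invariant.

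\textbf{Orthogonal complement.} For every $z\in V_n$ one has
\[
\langle z,E_i\rangle=\sum_{j\ne i} z_{ij}.
\]
Hence $W_n=\operatorname{span}\{E_1,\ldots,E_n\}^{\perp}$, which gives the direct sum decomposition
\[
V_n=\operatorname{span}\{E_1,\ldots,E_n\}\oplus W_n.
\]
This also gives invariance a second time, since the star span is invariant and the inner product is invariant. For the dimension count I will use the linear independence of the $E_i$ for $n\ge3$, which the paper states before the proposition. If a check is wanted, it comes from the Gram matrix $\langle E_i,E_j\rangle=(n-2)\delta_{ij}+1$, that is $(n-2)I+J$. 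Its eigenvalues are $n-2$ and $2n-2$, both nonzero for $n\ge3$. So
\[
\dim W_n=\binom n2-n=\frac{n(n-3)}2 .
\]

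\textbf{Identification with $(n-2,2)$.} The star span is isomorphic to the permutation module $\C^n\cong(n)\oplus(n-1,1)$ via $e_i\mapsto E_i$. This map is equivariant because $\sigma E_i=E_{\sigma(i)}$, and it is injective by the independence above. By (2.1) and complete reducibility (Maschke), the complement $W_n$ is then isomorphic to $V_n$ modulo the star span, which is $(n-2,2)$, since multiplicities of irreducibles are unique. For $n=3$ the dimension $n(n-3)/2$ is $0$, so $W_3=0$ and the component is absent.

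\textbf{Main obstacle.} The only delicate point is relying on (2.1) as cited from \cite{Sagan}. If a self-contained argument is preferred, I would instead compute characters. The character of $V_n$ at $\sigma$ is the number of $2$-subsets fixed by $\sigma$. The character of $\C^n$ is the number of fixed points. Taking the difference and comparing with the known character of $(n-2,2)$, for instance via the Frobenius identity $M^{(n-2,2)}=(n)\oplus(n-1,1)\oplus(n-2,2)$ for Young permutation modules, identifies $W_n$. Alternatively, it suffices to check that $\langle\chi_{W_n},\chi_{W_n}\rangle=1$ to get irreducibility, and then the dimension pins down the label.
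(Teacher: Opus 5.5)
Your proof is correct and follows the paper's argument: $W_n$ is the orthogonal complement of the star span $\cong(n)\oplus(n-1,1)$, so (2.1) leaves exactly $(n-2,2)$, and your direct invariance check and Gram-matrix verification $(n-2)I+J$ are harmless extras. One caution concerns only your optional aside: dimension alone does not determine the label, since conjugate partitions such as $(3,2)$ and $(2,2,1)$ have equal dimension, so that route would need to compare the full character rather than just the degree.
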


\begin{proof}
The defining equations of $W_n$ say precisely that $z$ is orthogonal to
each star vector $E_i$.  Therefore
\[
W_n=\operatorname{span}\{E_1,\ldots,E_n\}^{\perp}.
\]
Since the star span is $S_n$-invariant, so is its orthogonal complement.
The star span is the permutation representation $(n)\oplus(n-1,1)$.
Using the decomposition (2.1), its orthogonal complement is the remaining
irreducible component $(n-2,2)$.
\end{proof}

This proposition gives the first main point of the paper: the
$(n-2,2)$-spectrum is the spectrum of $X_G$ on zero-degree weighted
graphs.

\begin{definition}
The $(n-2,2)$-spectrum of $G$ is
\[
\Spec_{(n-2,2)}(G):=\Spec\left(X_G|_{W_n}\right).
\]
\end{definition}

\section{Coordinate formula for the operator}

The action of $X_G$ on the full edge space $V_n$ can be written explicitly.
For an unordered pair $ab$, let $d_a$ be the degree of $a$ in $G$, and
let $\mathbf 1_{ab\in E(G)}$ be $1$ if $ab$ is an edge of $G$ and $0$
otherwise.

\begin{proposition}
For every basis vector $e_{ab}$ of $V_n$ one has
\[
X_G e_{ab}
=
\bigl(m-d_a-d_b+2\cdot \mathbf 1_{ab\in E(G)}\bigr)e_{ab}
+
\sum_{\substack{c\ne b\\ ac\in E(G)}} e_{bc}
+
\sum_{\substack{c\ne a\\ bc\in E(G)}} e_{ac}.
\tag{5.1}
\]
The restriction of this operator to $W_n$ is $X_G^{(n-2,2)}$.
\end{proposition}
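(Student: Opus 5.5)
The plan is to compute directly how each transposition $(ij)$ with $ij\in E(G)$ acts on a fixed basis vector $e_{ab}$, and then sum over the edges. By definition $(ij)e_{ab}=e_{(ij)(a)\,(ij)(b)}$, so only the relative position of the pairs $\{i,j\}$ and $\{a,b\}$ matters. I would sort the edges $ij$ of $G$ into three disjoint classes.

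\begin{enumerate}[label=\textup{(\alph*)}]
\item Disjoint from $\{a,b\}$. Here $(ij)$ fixes $e_{ab}$.
\item Equal to $\{a,b\}$. Here $(ab)e_{ab}=e_{ba}=e_{ab}$, so $e_{ab}$ is again fixed.
\item Sharing exactly one vertex with $\{a,b\}$. Either $ij=ac$ with $c\ne b$, and then $(ac)e_{ab}=e_{cb}=e_{bc}$; or $ij=bc$ with $c\ne a$, and then $(bc)e_{ab}=e_{ac}$.
\end{enumerate}

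Summing over the edges, the class (c) contributions give exactly the two sums in (5.1). The remaining work is to count the edges that fix $e_{ab}$, i.e.\ those in classes (a) and (b), and to identify that count with the diagonal coefficient.

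The count is a small inclusion--exclusion, and it is the only place where care is needed. The edges meeting $\{a,b\}$ number $d_a+d_b-\mathbf 1_{ab\in E(G)}$, since the edge $ab$, if present, is counted in both degrees. So class (c) contains $d_a+d_b-2\cdot\mathbf 1_{ab\in E(G)}$ edges. The edges fixing $e_{ab}$ are all the others, giving
\[
m-\bigl(d_a+d_b-2\cdot\mathbf 1_{ab\in E(G)}\bigr)=m-d_a-d_b+2\cdot\mathbf 1_{ab\in E(G)},
\]
which is the stated diagonal coefficient. I would double-check that in the sums of (5.1) the conditions $c\ne b$ and $c\ne a$ exclude exactly the edge $ab$, so that no term is counted both diagonally and off-diagonally.

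Finally, the action defined by (5.1) is just $\rho(X_G)$ on the permutation module $V_n$. By the preceding proposition, $W_n$ is an $S_n$-invariant subspace isomorphic to $(n-2,2)$, so each $(ij)$ preserves $W_n$ and hence so does $X_G$. The restriction of $\rho(X_G)$ to $W_n$ is therefore, by definition, $X_G^{(n-2,2)}$, up to the choice of isomorphism, which does not affect the spectrum. I expect no real obstacle beyond the careful bookkeeping of the edge $ab$ itself.
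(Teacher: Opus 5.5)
Your proposal is correct and follows the paper's proof: you use the same three-way classification of the edges $ij$ relative to $\{a,b\}$, and the same appeal to invariance of $W_n$ under each transposition. Your inclusion--exclusion count $m-(d_a+d_b-2\cdot\mathbf 1_{ab\in E(G)})$ for the diagonal coefficient spells out the step the paper compresses into ``counting these three possibilities.''
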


\begin{proof}
Fix $ab$.  An edge-transposition $ij\in E(G)$ acts on $e_{ab}$ in three
ways.  If $ij$ is disjoint from $ab$, then it fixes $e_{ab}$.  If
$ij=ab$, it also fixes $e_{ab}$.  If $ij$ shares exactly one endpoint with
$ab$, it replaces that endpoint by the other endpoint of $ij$.  Counting
these three possibilities gives (5.1).  Since $W_n$ is invariant under
each permutation of vertices, it is invariant under the sum $X_G$.
\end{proof}

\begin{remark}
Formula (5.1) shows that the relation with line graphs is subtler than a
literal equality with the line graph spectrum of $G$.  The operator acts
on the edge space of the complete graph $K_n$, not only on the edge set
of $G$.  It is better viewed as an edge-replacement operator: an edge
$ab$ may move to $bc$ or $ac$ by using an edge of $G$ incident to one of
its endpoints.  Thus line-graph type adjacency appears, but on the full
pair space and with a degree-dependent diagonal term.
\end{remark}

\section{A universal trace formula}

The low-order trace computations below are best viewed as examples of a
more general formula.  We record it first because it gives a conceptual
answer to the ``higher trace'' problem.

Let $\tau_e=(ij)$ denote the transposition corresponding to an edge
$e=ij$.  For a word
\[
w=(e_1,\ldots,e_r)\in E(G)^r
\]
put
\[
\pi(w)=\tau_{e_1}\cdots \tau_{e_r}\in S_n.
\]
If $\sigma\in S_n$, let $c_1(\sigma)$ be the number of fixed points of
$\sigma$ and let $c_2(\sigma)$ be the number of two-cycles in its cycle
decomposition.

\begin{lemma}[Character of $(n-2,2)$]
For every $\sigma\in S_n$,
\[
\chi^{(n-2,2)}(\sigma)
=
\binom{c_1(\sigma)}2+c_2(\sigma)-c_1(\sigma).
\tag{6.1}
\]
\end{lemma}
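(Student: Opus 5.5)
The plan is to compute the character of the permutation representation $V_n$ on unordered pairs and subtract the characters of the other two summands in the decomposition (2.1). By that decomposition, or equivalently by the identification $V_n=\operatorname{span}\{E_1,\ldots,E_n\}\oplus W_n$ with $W_n\cong(n-2,2)$, we have
\[
\chi^{(n-2,2)}(\sigma)=\chi_{V_n}(\sigma)-\chi_{\C^n}(\sigma),
\]
because the star span is isomorphic to the permutation representation $\C^n\cong(n)\oplus(n-1,1)$. Both terms on the right are characters of permutation representations, so each equals the number of basis elements fixed by $\sigma$.

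The first step is $\chi_{\C^n}(\sigma)=c_1(\sigma)$, the number of fixed points, which is immediate. The second step is to count the unordered pairs $\{i,j\}$ with $\{\sigma(i),\sigma(j)\}=\{i,j\}$. There are exactly two ways this can happen:
\begin{itemize}
\item[(a)] $\sigma(i)=i$ and $\sigma(j)=j$, which gives $\binom{c_1(\sigma)}2$ pairs;
\item[(b)] $\sigma(i)=j$ and $\sigma(j)=i$, so that $\{i,j\}$ is the support of a two-cycle of $\sigma$, which gives $c_2(\sigma)$ pairs.
\end{itemize}
These cases are disjoint and exhaustive. If $\sigma$ maps $\{i,j\}$ to itself, then either it fixes both points or it swaps them; a swap is precisely a two-cycle in the cycle decomposition. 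Hence $\chi_{V_n}(\sigma)=\binom{c_1(\sigma)}2+c_2(\sigma)$, and subtracting $c_1(\sigma)$ gives (6.1).

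The only point needing care is the identification of the star span with the permutation module $\C^n$ as $S_n$-representations. The paper already asserts this: the map $e_i\mapsto E_i$ is $S_n$-equivariant, since $\sigma E_i=E_{\sigma(i)}$, and it is injective for $n\geq3$. Because characters are additive on direct sums, no further obstacle is expected.

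As a sanity check, take $\sigma=\mathrm{id}$. Then $c_1=n$ and $c_2=0$, and the formula gives $\binom n2-n=n(n-3)/2$, which matches the dimension computed earlier. For $n=3$ the formula gives $0$ identically, consistent with the component being absent.
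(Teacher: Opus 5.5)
Your proposal is correct and follows the paper's proof: you count fixed two-subsets to get $\binom{c_1(\sigma)}2+c_2(\sigma)$ and subtract the vertex permutation character $c_1(\sigma)$ of $(n)\oplus(n-1,1)$. Your extra justification that $e_i\mapsto E_i$ is an $S_n$-equivariant injection is a welcome detail that the paper leaves implicit.
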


\begin{proof}
The permutation representation of $S_n$ on two-element subsets of $[n]$
has character
\[
\binom{c_1(\sigma)}2+c_2(\sigma),
\]
because a two-subset is fixed either when both of its elements are fixed
points of $\sigma$, or when it is a two-cycle of $\sigma$.  This
permutation representation decomposes as
\[
(n)\oplus(n-1,1)\oplus(n-2,2).
\]
The character of $(n)\oplus(n-1,1)$ is the character of the natural
permutation representation on vertices, namely $c_1(\sigma)$.  Subtracting
it gives (6.1).
\end{proof}

\begin{theorem}[Universal trace formula]
For every graph $G$ on $n$ vertices and every $r\geq 1$,
\[
M_r^{(2)}(G)
:=\tr\left((X_G|_{W_n})^r\right)
=
\sum_{(e_1,\ldots,e_r)\in E(G)^r}
\left[
\binom{c_1(\pi(e_1,\ldots,e_r))}{2}
+c_2(\pi(e_1,\ldots,e_r))
-c_1(\pi(e_1,\ldots,e_r))
\right].
\tag{6.2}
\]
Equivalently,
\[
M_r^{(2)}(G)=
\sum_{(e_1,\ldots,e_r)\in E(G)^r}
\chi^{(n-2,2)}(\tau_{e_1}\cdots\tau_{e_r}).
\tag{6.3}
\]
\end{theorem}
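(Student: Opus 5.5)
The plan is to expand the power of the group-algebra element and then take traces term by term using the character formula of the Lemma (6.1). Write $\rho=\rho_{(n-2,2)}$ for the representation of $S_n$ on $W_n$. By the Proposition identifying $W_n\cong(n-2,2)$ and the coordinate Proposition, $X_G|_{W_n}=\rho(X_G)$ with $X_G=\sum_{e\in E(G)}\tau_e\in\C[S_n]$.

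First, since $\rho$ extends linearly to an algebra homomorphism $\C[S_n]\to\operatorname{End}(W_n)$, we have $(X_G|_{W_n})^r=\rho(X_G^r)$. Next, expanding the $r$-fold product in the (noncommutative) group algebra gives
\[
X_G^r=\sum_{(e_1,\ldots,e_r)\in E(G)^r}\tau_{e_1}\cdots\tau_{e_r}=\sum_{w\in E(G)^r}\pi(w).
\]
Here one must keep ordered words, not multisets, since the transpositions need not commute.

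Applying $\rho$ and then the trace, which is linear, gives
\[
\tr\bigl(\rho(X_G^r)\bigr)=\sum_{w\in E(G)^r}\tr\rho(\pi(w))=\sum_{w\in E(G)^r}\chi^{(n-2,2)}(\pi(w)).
\]
This is exactly (6.3). Substituting the Lemma's formula $\chi^{(n-2,2)}(\sigma)=\binom{c_1(\sigma)}2+c_2(\sigma)-c_1(\sigma)$ with $\sigma=\pi(w)$ yields (6.2).

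There is no real obstacle here. The only points to check are these:
\begin{itemize}
\item The character of $W_n$ equals the irreducible character $\chi^{(n-2,2)}$ used in the Lemma. This holds because $W_n$ is the orthogonal complement of the star span inside the $2$-subset permutation module, which is the same subtraction used in the proof of the Lemma.
\item The standing assumption is $n\geq4$. For $n\le 3$, both sides still agree when read as the trace on $W_n=0$ and as the character difference computed in the Lemma.
\end{itemize}
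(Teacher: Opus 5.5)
Your argument is correct and is the paper's proof. You expand $X_G^r$ as a sum over ordered words in the group algebra, take the trace in $(n-2,2)$ to obtain (6.3), and substitute the character formula (6.1) to obtain (6.2).

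The only slip is in your side remark on small $n$. Agreement holds at $n=3$, where $\binom{c_1}{2}+c_2-c_1$ vanishes identically. It fails at $n=2$: the star vectors are dependent there and $W_2=0$, yet for $G=K_2$ the right side of (6.3) equals $(-1)^{r+1}$. So the formula should just be kept under the standing assumption $n\geq4$.
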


\begin{proof}
Expanding $X_G^r$ in the group algebra gives
\[
X_G^r=\sum_{(e_1,\ldots,e_r)\in E(G)^r}
\tau_{e_1}\cdots\tau_{e_r}.
\]
Taking the trace in the irreducible representation $(n-2,2)$ gives
(6.3).  Formula (6.2) follows from the preceding character formula.
\end{proof}

\subsection{Support-subgraph expansion}

The universal trace formula may be reorganized according to the set of
distinct edges occurring in a word.  This works for arbitrary graphs, not
only for trees.

If $G$ is a graph and $w=(e_1,\ldots,e_r)$ is a word in edges of $G$, let
$H(w)$ be the simple support graph whose edge set is the set of distinct
edges occurring in $w$ and whose vertex set consists of their endpoints.
Thus $H(w)$ has no isolated vertices and at most $r$ edges.

For a finite simple graph $H$ without isolated vertices, let $N_H(G)$ denote
the number of edge subsets $S\subseteq E(G)$ for which the graph with edge
set $S$ and vertex set formed by the endpoints of $S$ is isomorphic to $H$.
No inducedness condition is imposed on the ambient graph $G$.

\begin{theorem}[Support-subgraph expansion]
For every graph $G$ on $n$ vertices and every $r\geq1$,
\[
M_r^{(2)}(G)=
\sum_H N_H(G)\,C_{r,H}(n),
\tag{6.4}
\]
where $H$ runs over isomorphism types of finite simple graphs with no
isolated vertices and at most $r$ edges, and $C_{r,H}(n)$ is a universal
coefficient depending only on $r,n$, and the isomorphism type of $H$.
\end{theorem}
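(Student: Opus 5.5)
Starting from the universal trace formula (6.2), I group the words $w\in E(G)^r$ according to their support edge set $S=\{e_1,\ldots,e_r\}\subseteq E(G)$. Every word has a well-defined support $S$ with $|S|\le r$, so
\[
M_r^{(2)}(G)=\sum_{\substack{S\subseteq E(G)\\ 1\le |S|\le r}}\ \sum_{\substack{w\in S^r\\ \{e_1,\ldots,e_r\}=S}}\chi^{(n-2,2)}(\pi(w)).
\]
(For $r\ge1$ the empty support never occurs.) The plan is to show that the inner sum depends only on the isomorphism type of the graph $H_S$ spanned by $S$, together with $n$. That common value will be $C_{r,H}(n)$. Grouping the subsets $S$ by isomorphism type then gives exactly $N_H(G)$ copies of each type, which is (6.4).

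The key step is computing $\chi^{(n-2,2)}(\pi(w))$ locally. Let $U=V(H_S)$, with $|U|=k\le 2r$. The permutation $\pi(w)$ fixes every vertex outside $U$, so
\[
c_1(\pi(w))=(n-k)+c_1^U(w),\qquad c_2(\pi(w))=c_2^U(w),
\]
where $c_1^U$ and $c_2^U$ count the fixed points and two-cycles of the restriction of $\pi(w)$ to $U$. Substituting into (6.1) gives
\[
\chi^{(n-2,2)}(\pi(w))=\binom{n-k+c_1^U}{2}+c_2^U-(n-k)-c_1^U.
\]
This is a polynomial in $n$ whose coefficients depend only on $k$, $c_1^U$ and $c_2^U$.

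Now fix a graph isomorphism $\phi:H_S\to H$ onto a chosen representative $H$ on a fixed vertex set. It induces a bijection between words of full support $S$ and words of full support $E(H)$. It also conjugates the restricted permutations: $\pi(\phi(w))=\phi\,\pi(w)\,\phi^{-1}$ on the vertex sets. So $c_1^U$ and $c_2^U$ are preserved, and the inner sum equals
\[
C_{r,H}(n):=\sum_{\substack{w\in E(H)^r\\ \text{support}=E(H)}}\left[\binom{n-k+c_1(\pi_H(w))}{2}+c_2(\pi_H(w))-(n-k)-c_1(\pi_H(w))\right],
\]
with $\pi_H(w)\in S_U$ computed inside $H$. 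This value depends only on $r$, $n$ and the type of $H$; in fact it is a polynomial in $n$ of degree at most $2$. The value is independent of the choice of $\phi$, because the sum is intrinsic to $H_S$.

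I expect no serious obstacle. The main care needed is bookkeeping: checking that $c_1$ and $c_2$ split additively into the part on $U$ and the part outside $U$. In particular, the fixed points outside $U$ contribute no two-cycles and are counted in $n-k$. Two further points need a remark. The formula is only meaningful when $n\ge k$, but $N_H(G)=0$ automatically when $H$ has more than $n$ vertices, so those terms can be set to zero. Isolated vertices of $G$ cause no trouble, since $H_S$ is defined without isolated vertices.
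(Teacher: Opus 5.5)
Your proposal is correct and follows the paper's own argument: you group words by support, note that the character value depends only on the product of transpositions inside $H$ and on the $n-|V(H)|$ outside fixed points, and conclude that every embedded copy of $H$ contributes the same $C_{r,H}(n)$. Your explicit splitting $c_1=(n-k)+c_1^U$, $c_2=c_2^U$ and the conjugation by $\phi$ just make the paper's one-line justification precise, and they also show that $C_{r,H}(n)$ is a polynomial of degree at most $2$ in $n$.
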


\begin{proof}
Group the words $w\in E(G)^r$ according to the isomorphism type of their
support graph $H(w)$.  Once a support is identified with a fixed model $H$,
the sum of the character values over all words using every edge of $H$ at
least once depends only on multiplication of the corresponding
transpositions inside $H$ and on the number $n-|V(H)|$ of outside vertices,
which are fixed by every resulting permutation.  Hence every embedded copy
of $H$ contributes the same universal quantity $C_{r,H}(n)$.
\end{proof}

\begin{corollary}[Tree support-forest expansion]
For every tree $T$ on $n$ vertices and every $r\geq1$,
\[
M_r^{(2)}(T)=
\sum_{\mathcal F} N_{\mathcal F}(T)\,C_{r,\mathcal F}(n),
\tag{6.5}
\]
where $\mathcal F$ runs over isomorphism types of finite forests with no
isolated vertices and at most $r$ edges.
\end{corollary}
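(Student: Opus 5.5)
The corollary follows directly from the Support-subgraph expansion (6.4). We apply that theorem with $G=T$ and observe that only forest types $H$ can contribute.

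The key step is that every subgraph of a tree is a forest. For any $H$ with $N_H(T)>0$, some edge subset $S\subseteq E(T)$ spans a graph isomorphic to $H$. That graph is a subgraph of the acyclic graph $T$. A cycle in it would be a cycle in $T$, so it is acyclic, i.e.\ a forest. By the definition of $N_H$, its vertex set is exactly the set of endpoints of $S$, so it has no isolated vertices. It also has at most $r$ edges, since $H$ ranges over types with at most $r$ edges. Hence every term in (6.4) with $H$ not a forest has $N_H(T)=0$ and drops out.

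What remains is the sum over isomorphism types $\mathcal F$ of forests with no isolated vertices and at most $r$ edges. The coefficients are $C_{r,\mathcal F}(n)$, exactly the universal coefficients from (6.4). This is (6.5). The coefficients are still universal, depending only on $r$, $n$ and the type of $\mathcal F$, because they are the same numbers $C_{r,H}(n)$ already shown to be universal for arbitrary graphs. In particular, we do not need to recompute them for trees.

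There is no real obstacle. The only point needing care is that $N_H$ counts edge subsets with the endpoint vertex set, not induced subgraphs. Because of this, the support graph $H(w)$ of any word in $E(T)^r$ is a subforest of $T$ without isolated vertices, which is exactly the class indexed in (6.5).
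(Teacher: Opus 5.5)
Your proposal is correct and follows the paper's own proof: the paper also obtains (6.5) by specializing the support-subgraph expansion (6.4) to $G=T$, using only that every subgraph of a tree is a forest, so that $N_H(T)=0$ for every non-forest type $H$. You spell out this vanishing, along with the no-isolated-vertices and at-most-$r$-edges bookkeeping, in more detail than the paper's one-line argument.
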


\begin{proof}
Every subgraph of a tree is a forest, so the preceding theorem specializes
directly to this case.
\end{proof}

\begin{remark}
Thus support forests are not an additional tree-specific mechanism but the
acyclic specialization of a general support-subgraph expansion.  The first
few explicit trace formulas below should be understood as the first terms of
this universal package.
\end{remark}

\subsection{Weighted trace polynomials and a sharper reconstruction package}

The unweighted spectrum gives one number $M_r^{(2)}(G)$ in each degree.
For conceptual purposes it is useful to consider a richer object first.
Assign an independent variable $y_e$ to every edge $e$ of $G$ and put
\[
X_G(y)=\sum_{e\in E(G)} y_e\tau_e\in \C[y_e:e\in E(G)][S_n].
\]
Define the weighted trace polynomial
\[
\mathcal M_r^{(2)}(G;y)=
\tr_{(n-2,2)}\left(X_G(y)^r\right).
\tag{6.6}
\]
The ordinary trace is the diagonal specialization
\[
M_r^{(2)}(G)=\mathcal M_r^{(2)}(G;1,\ldots,1).
\]

For a finite simple graph $H$ without isolated vertices and a function
$\alpha:E(H)\to\mathbb Z_{>0}$, write
$|\alpha|=\sum_{e\in E(H)}\alpha_e$.  Let $C_{H,\alpha}(n)$ be the universal
character sum obtained by choosing any labelling of $H$ inside $[n]$ and
summing
\[
\chi^{(n-2,2)}(\tau_{e_1}\cdots\tau_{e_r})
\tag{6.7}
\]
over all words $(e_1,\ldots,e_r)$ in which each edge $e$ occurs exactly
$\alpha_e$ times.  The result is independent of the chosen labelling.

\begin{theorem}[Weighted support-subgraph expansion]
For every graph $G$ on $n$ vertices and every $r\geq1$,
\[
\mathcal M_r^{(2)}(G;y)=
\sum_{\substack{(H,\alpha)\\ |\alpha|=r}}
C_{H,\alpha}(n)
\sum_{\substack{(H',\alpha')\subseteq G\\
(H',\alpha')\cong(H,\alpha)}}
\prod_{e\in E(H')}y_e^{\alpha'_e},
\tag{6.8}
\]
where the outer sum runs over isomorphism classes of finite simple support
graphs without isolated vertices, equipped with positive edge
multiplicities.  The inner sum runs over edge-subset embeddings in $G$ with
the transported multiplicity function, counted once as embedded weighted
support subgraphs.
\end{theorem}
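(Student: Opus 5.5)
The plan mirrors the proof of the unweighted support-subgraph expansion (6.4), but keeps track of edge multiplicities, which the variables $y_e$ record. First expand $X_G(y)^r$ in the group algebra $\C[y_e:e\in E(G)][S_n]$. Since the $y_e$ are central scalars,
\[
X_G(y)^r=\sum_{(e_1,\ldots,e_r)\in E(G)^r} y_{e_1}\cdots y_{e_r}\,\tau_{e_1}\cdots\tau_{e_r}.
\]
Taking the trace in $(n-2,2)$ and using linearity gives
\[
\mathcal M_r^{(2)}(G;y)=\sum_{w\in E(G)^r}\Bigl(\prod_{e}y_e^{\alpha_w(e)}\Bigr)\chi^{(n-2,2)}(\pi(w)),
\]
where $\alpha_w(e)$ is the number of occurrences of $e$ in $w$. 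The function $\alpha_w$ is positive exactly on the edges of the support graph $H(w)$, and $|\alpha_w|=r$.

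Next, group the words according to the pair $(H(w),\alpha_w|_{E(H(w))})$. This pair is an embedded weighted support subgraph $(H',\alpha')\subseteq G$. The monomial $\prod_{e\in E(H')}y_e^{\alpha'_e}$ depends only on this pair. So the trace becomes a sum over embedded pairs $(H',\alpha')$ with $|\alpha'|=r$, each weighted by its monomial times
\[
S(H',\alpha')=\sum_{w:\,\alpha_w=\alpha'}\chi^{(n-2,2)}(\pi(w)).
\]
The key step is to show that $S(H',\alpha')=C_{H,\alpha}(n)$ whenever $(H',\alpha')\cong(H,\alpha)$. Take a multiplicity-preserving isomorphism $\phi:H\to H'$ and extend its vertex map to a permutation $\sigma\in S_n$; this is possible because both vertex sets sit inside $[n]$ and have the same size. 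Then $\sigma\tau_e\sigma^{-1}=\tau_{\phi(e)}$, so conjugation by $\sigma$ is a bijection between the words with multiplicity $\alpha$ on the model and those with multiplicity $\alpha'$ on $H'$. It sends $\pi(w)$ to $\sigma\pi(w)\sigma^{-1}$. Since characters are class functions, the two sums agree.

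The same argument, applied to two different labellings of $H$ inside $[n]$, proves the claimed independence of $C_{H,\alpha}(n)$ from the labelling. The only dependence on $n$ enters through the fixed outside vertices, via $c_1$ in (6.1).

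The main point requiring care is the bookkeeping: each word must be counted exactly once. Every word determines a unique embedded pair $(H',\alpha')$. Each embedded pair lies in a unique isomorphism class $(H,\alpha)$, where isomorphism must respect multiplicities. This is why the inner sum counts embedded weighted subgraphs once each, as edge subsets with transported multiplicities, and not once per isomorphism $\phi$; automorphisms of $(H,\alpha)$ must not be overcounted. With this convention the regrouped sum is exactly (6.8). Setting all $y_e=1$ and summing over $\alpha$ with fixed $H$ gives $C_{r,H}(n)=\sum_{|\alpha|=r}C_{H,\alpha}(n)$, which recovers (6.4) as a consistency check.
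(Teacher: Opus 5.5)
Your proof is correct and takes the same route as the paper: you expand $X_G(y)^r$ into words, group the words by their embedded weighted support $(H',\alpha')$, and note that the character sum depends only on the isomorphism class of $(H,\alpha)$. Your conjugation argument (extending a multiplicity-preserving isomorphism to some $\sigma\in S_n$ and using that $\chi^{(n-2,2)}$ is a class function) makes precise the universality step that the paper states in one line.
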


\begin{proof}
Expand $X_G(y)^r$.  Each ordered word in edges contributes the monomial
$y_{e_1}\cdots y_{e_r}$ multiplied by the character value of the product of
the corresponding transpositions.  Collect words with the same support
graph and multiplicity vector.  The isomorphism type of the weighted
support graph determines the universal character sum for every embedded
copy, proving (6.8).
\end{proof}

\begin{corollary}
For trees, the preceding expansion is indexed by weighted support forests.
\end{corollary}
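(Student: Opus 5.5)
The corollary is a direct specialization of the weighted support-subgraph expansion (6.8), in the same way that Corollary (6.5) specializes (6.4). I would take $G=T$ a tree in (6.8) and observe that the inner sum runs over embedded weighted support subgraphs $(H',\alpha')\subseteq T$. Each such $H'$ is the graph formed by an edge subset $S\subseteq E(T)$ together with the endpoints of those edges. Since $T$ is acyclic, any subgraph of $T$ is acyclic. So $H'$ is a forest, and by construction it has no isolated vertices.

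Consequently, every isomorphism class $(H,\alpha)$ appearing in the outer sum of (6.8) with a nonzero inner sum has $H$ a forest without isolated vertices, equipped with positive multiplicities $\alpha$ satisfying $|\alpha|=r$. The classes $(H,\alpha)$ with $H$ containing a cycle have empty inner sums, so they contribute zero and can be dropped. What remains is
\[
\mathcal M_r^{(2)}(T;y)=\sum_{\substack{(\mathcal F,\alpha)\\ |\alpha|=r}} C_{\mathcal F,\alpha}(n)\sum_{\substack{(F',\alpha')\subseteq T\\ (F',\alpha')\cong(\mathcal F,\alpha)}}\prod_{e\in E(F')}y_e^{\alpha'_e},
\]
with $\mathcal F$ ranging over forests, which is what ``indexed by weighted support forests'' means.

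The coefficients $C_{\mathcal F,\alpha}(n)$ are the same universal character sums (6.7) as before, and they need no re-derivation. There is no real obstacle here. The only point to state explicitly is that the multiplicity data $\alpha$ is unaffected by the specialization: the restriction acts only on the underlying support graph.
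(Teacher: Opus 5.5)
Your proposal is correct and takes the same route as the paper. The paper gives no separate proof here; it treats the corollary as the weighted analogue of Corollary (6.5), whose one-line argument is exactly yours: every support subgraph of a tree is a forest, so only forest support types have nonempty inner sums in (6.8).
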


\subsection{Weighted reconstruction from pair coefficients}

The weighted trace polynomial is not merely a bookkeeping device.  Its
coefficients on monomials involving two distinct edge variables recover the
line graph.

Let $G$ be a simple graph on $n\geq4$ vertices.  For two distinct edges
$e,f$, the coefficient of $y_ey_f$ in $\mathcal M_2^{(2)}(G;y)$ is
\[
2\chi^{(n-2,2)}(\tau_e\tau_f).
\]
If $e$ and $f$ are adjacent, then $\tau_e\tau_f$ is a $3$-cycle; if they are
disjoint, it has cycle type $(2,2,1^{n-4})$.  Hence
\[
[y_ey_f]\mathcal M_2^{(2)}(G;y)=
\begin{cases}
2\alpha_n,&e\cap f\neq\varnothing,\\
2\beta_n,&e\cap f=\varnothing,
\end{cases}
\tag{6.9}
\]
where
\[
\alpha_n=\frac{n^2-9n+18}{2},\qquad
\beta_n=\frac{n^2-11n+32}{2},
\]
so that $\alpha_n-\beta_n=n-7$.

The exceptional value $n=7$ disappears already in degree four.  Put
$d=\dim(n-2,2)=n(n-3)/2$.  For distinct edges $e,f$, the coefficient of
$y_e^2y_f^2$ in $\mathcal M_4^{(2)}(G;y)$ is
\[
[y_e^2y_f^2]\mathcal M_4^{(2)}(G;y)=
\begin{cases}
4d+2\alpha_n,&e\cap f\neq\varnothing,\\
6d,&e\cap f=\varnothing.
\end{cases}
\tag{6.10}
\]
Indeed, for disjoint edges all six words with two occurrences of each
transposition multiply to the identity.  For adjacent edges, four multiply
to the identity and the two alternating words multiply to $3$-cycles.  The
difference between the two values in (6.10) is
\[
(4d+2\alpha_n)-6d=-6(n-3),
\tag{6.11}
\]
which is nonzero for every $n\geq4$.

\begin{theorem}[Weighted line-graph reconstruction]
Let $G$ be a simple graph on $n\geq4$ vertices.
\begin{enumerate}[label=\textup{(\alph*)}]
\item If $n\neq7$, the weighted quadratic trace polynomial
$\mathcal M_2^{(2)}(G;y)$, considered up to permutation of the edge
variables, determines the line graph $L(G)$.
\item For every $n\geq4$, the weighted quartic trace polynomial
$\mathcal M_4^{(2)}(G;y)$, considered up to permutation of the edge
variables, determines $L(G)$.
\end{enumerate}
\end{theorem}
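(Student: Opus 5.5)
The plan is to read off the line graph directly from the pair coefficients computed in (6.9) and (6.10). The weighted polynomial $\mathcal M_r^{(2)}(G;y)$ lives in $\C[y_e:e\in E(G)]$. Being given it ``up to permutation of the edge variables'' means we know the polynomial together with its set of variables $\{y_e\}$, which index $E(G)$, but not which pair of vertices each variable corresponds to. The line graph $L(G)$ has vertex set $E(G)$, and $e\sim f$ exactly when $e\cap f\neq\varnothing$. So it suffices to show that, for every two distinct variables $y_e,y_f$, some coefficient of the polynomial decides whether $e$ and $f$ share an endpoint. A relabelling of the variables then only relabels the vertices of $L(G)$, and hence $L(G)$ is determined up to isomorphism.

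For part (a), I will take the monomial $y_ey_f$ in $\mathcal M_2^{(2)}(G;y)$. Expanding $X_G(y)^2$, the only words producing $y_ey_f$ are $(e,f)$ and $(f,e)$. Their products $\tau_e\tau_f$ and $\tau_f\tau_e$ are conjugate (inverse to each other), so they have the same character value, and the coefficient is $2\chi^{(n-2,2)}(\tau_e\tau_f)$. Next I apply the character formula (6.1) in the two cases.
\begin{itemize}
\item If $e,f$ are adjacent, $\tau_e\tau_f$ is a $3$-cycle with $c_1=n-3$ and $c_2=0$, giving $\alpha_n=\binom{n-3}2-(n-3)=(n^2-9n+18)/2$.
\item If $e,f$ are disjoint, the product has $c_1=n-4$ and $c_2=2$, giving $\beta_n=\binom{n-4}2+2-(n-4)=(n^2-11n+32)/2$.
\end{itemize}
The difference is $\alpha_n-\beta_n=n-7$. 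For $n\neq7$ the two coefficient values are distinct universal numbers depending only on $n$, and $n$ itself is known from the setting. So reading the coefficient of each $y_ey_f$ decides adjacency in $L(G)$.

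For part (b), I will use the coefficient of $y_e^2y_f^2$ in $\mathcal M_4^{(2)}(G;y)$. It is a sum over the $\binom42=6$ arrangements of $e,e,f,f$.
\begin{itemize}
\item If $e\cap f=\varnothing$, the transpositions commute, every word is the identity, and the coefficient is $6\chi(1)=6d$ with $d=n(n-3)/2$.
\item If $e,f$ are adjacent, the words $eeff$, $ffee$, $effe$, $feef$ reduce to the identity, since $\tau^2=1$ lets adjacent equal letters cancel. The alternating words $efef$ and $fefe$ give $(\tau_e\tau_f)^2$, which is again a $3$-cycle. So the coefficient is $4d+2\alpha_n$.
\end{itemize}
The difference is $2\alpha_n-2d=(n^2-9n+18)-(n^2-3n)=-6(n-3)$, which is nonzero for $n\geq4$. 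Hence adjacency is again read off coefficientwise, for every $n\geq4$.

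The step I expect to need the most care is the word count for $y_e^2y_f^2$: making sure the six arrangements are classified correctly, and that no other words contribute to that monomial (none do, since each word must use exactly two $e$'s and two $f$'s). A second point to state clearly is the meaning of ``determines.'' The data are the polynomial with unlabelled variables, and the reconstructed object is the graph on the variable set whose edges are the pairs with the ``adjacent'' coefficient value; this graph is isomorphic to $L(G)$. Finally, I would note that the statement is about the line graph only. Passing to $G$ itself, as in the abstract's reconstruction claim for connected graphs, would additionally invoke Whitney's theorem together with the separate handling of $K_3$ versus $K_{1,3}$, and that is not part of this theorem.
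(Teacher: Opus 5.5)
Your proposal is correct and follows the paper's proof. You read adjacency in $L(G)$ off the coefficients of $y_ey_f$ in $\mathcal M_2^{(2)}(G;y)$ via (6.9) when $n\neq7$, and off the coefficients of $y_e^2y_f^2$ in $\mathcal M_4^{(2)}(G;y)$ via (6.10)--(6.11) for all $n\geq4$, and your character values and six-word classification match the paper's computation; the only slip is that $fefe$ gives $(\tau_f\tau_e)^2$ rather than $(\tau_e\tau_f)^2$, but this is the inverse $3$-cycle and has the same character value.
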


\begin{proof}
For $n\neq7$, (6.9) distinguishes adjacent from disjoint pairs of edges, so
the coefficients of the square-free quadratic monomials are exactly the
adjacency data of $L(G)$.  For every $n\geq4$, equation (6.10), together
with (6.11), gives the same conclusion from the coefficients of
$y_e^2y_f^2$ in the quartic trace.
\end{proof}

\begin{corollary}[Weighted reconstruction of connected graphs]
Let $G$ be a connected simple graph on $n\geq4$ vertices.  If $n\neq7$,
then $\mathcal M_2^{(2)}(G;y)$ determines $G$ up to isomorphism.  For every
$n\geq4$, the single quartic polynomial $\mathcal M_4^{(2)}(G;y)$ determines
$G$ up to isomorphism.
\end{corollary}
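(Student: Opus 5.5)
The plan is to combine the Weighted line-graph reconstruction theorem with Whitney's line graph theorem. By part (a) of that theorem, when $n\neq7$ the polynomial $\mathcal M_2^{(2)}(G;y)$, taken up to permutation of the edge variables, determines $L(G)$ up to isomorphism. By part (b), for every $n\geq4$ the polynomial $\mathcal M_4^{(2)}(G;y)$ does the same. It therefore suffices to show that a connected simple graph on $n\geq4$ vertices is determined up to isomorphism by its line graph together with the known vertex count $n$.

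Here I will invoke Whitney's theorem: two connected graphs with isomorphic line graphs are isomorphic, except for the pair $K_3$ and $K_{1,3}$. Both of these have line graph $K_3$. Since $K_3$ has only three vertices, it is excluded by the hypothesis $n\geq4$. So among connected graphs on $n\geq4$ vertices, $L(G)$ determines $G$.

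Two details need checking. First, "up to permutation of the edge variables" must really yield an isomorphism class of $L(G)$. A relabelling of the variables corresponds to a bijection of edge sets, and the coefficient data (6.9) or (6.10) is transported along that bijection. Hence equal polynomials up to relabelling give isomorphic line graphs.

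Second, $n$ must be known, since the coefficient formulas depend on it. This is harmless, because $n$ is a fixed parameter of the statement: the trace is taken in the representation $(n-2,2)$ of $S_n$. Alternatively, $n$ can be read off from the constant-type coefficients, for instance $[y_e^2]\mathcal M_2^{(2)}=d=n(n-3)/2$.

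The only potentially delicate point is connectivity. Whitney's theorem fails for disconnected graphs: isolated vertices are invisible to $L(G)$, and $K_3\sqcup K_1$ and $K_{1,3}$ have isomorphic line graphs. This is exactly why the hypothesis is imposed, so no further obstacle is expected.
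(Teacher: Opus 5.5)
Your proposal is correct and matches the paper's proof: the weighted line-graph reconstruction theorem recovers $L(G)$, using (6.9) for $n\neq7$ and (6.10)--(6.11) for every $n\geq4$. Whitney's theorem then recovers $G$, since $n\geq4$ rules out the $K_3$ alternative, and your side remarks (transporting coefficients along the variable relabelling, reading off $n$, and the $K_3\sqcup K_1$ versus $K_{1,3}$ example showing connectivity is needed) are accurate.
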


\begin{proof}
By the theorem, the relevant weighted trace determines $L(G)$.  Whitney's
line-graph theorem \cite{Whitney} says that two connected simple graphs with
isomorphic line graphs are isomorphic, apart from the pair $K_3$ and
$K_{1,3}$.  Since here both graphs have the fixed number $n\geq4$ of
vertices, the $K_3$ alternative is excluded.  Thus the line graph determines
$G$.
\end{proof}

\begin{corollary}[Weighted reconstruction of trees]
For every tree $T$ on $n\geq4$ vertices, the weighted quartic trace
$\mathcal M_4^{(2)}(T;y)$ alone determines $T$ up to isomorphism.  If
$n\neq7$, the weighted quadratic trace already suffices.
\end{corollary}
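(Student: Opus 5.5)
The corollary follows from the Weighted line-graph reconstruction theorem combined with Whitney's theorem, in the same way as the preceding corollary for connected graphs. Trees are connected simple graphs, so the preceding corollary already gives the statement. I would still spell out the argument to confirm that nothing about trees breaks the hypotheses. The plan has three steps.

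First, I apply part (b) of the Weighted line-graph reconstruction theorem to $T$. It says that $\mathcal M_4^{(2)}(T;y)$, taken up to permutation of the edge variables, determines $L(T)$. Concretely, by (6.10) each pair of distinct variables $y_e,y_f$ has coefficient $[y_e^2y_f^2]$ equal to either $4d+2\alpha_n$ or $6d$. By (6.11) these two values differ by $-6(n-3)\neq0$ for $n\geq4$. So the adjacency relation on the variables is recovered exactly, and permuting the variables only relabels the vertices of $L(T)$. When $n\neq7$, I use part (a) and (6.9) instead, since there the two possible coefficients differ by $2(n-7)\neq0$.

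Second, I pass from $L(T)$ to $T$ using Whitney's theorem. Let $T'$ be any other graph on $n$ vertices with the same weighted trace. I want to conclude $T'\cong T$, so I have to decide which class of competitors $T'$ ranges over.

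If $T'$ is restricted to trees on $n$ vertices, or to connected graphs on $n$ vertices, then Whitney's theorem gives $T\cong T'$. The only exception in Whitney's theorem is $K_3$ versus $K_{1,3}$. Since $K_3$ has $3\neq n$ vertices, it is excluded by the vertex count; it is also not a tree.

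If $T'$ is allowed to be any simple graph on $n$ vertices, I need one more step, and this is the main point to check. I would first read off $m=|E(T')|=n-1$ as the number of edge variables. I would then use the fact that $L(T')$ is connected, since it equals $L(T)$, which is connected for a tree with at least two edges. It follows that the edges of $T'$ all lie in a single component $C$, with all other vertices isolated.

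Third, I show $C$ spans all $n$ vertices. The component $C$ is connected with $n-1$ edges, so it has at most $n$ vertices. If it had fewer than $n$ vertices it would contain a cycle. In that case $L(C)$ would not be the line graph of a tree, except possibly through the $K_3$/$K_{1,3}$ ambiguity. But $L(T)$ has $n-1\geq3$ vertices and comes from a tree, and Whitney's uniqueness applies to connected graphs with that line graph. The only way a non-tree $C$ could occur is $C=K_3$ with $L(T)=K_3$, i.e.\ $T=K_{1,3}$ and $n=4$. I would then rule out this case by the vertex count: a $K_3$ plus one isolated vertex has the same line graph as $K_{1,3}$. If that case genuinely survives, I would instead check directly that the $y_e^2$ coefficients, or the coefficient of $y_ey_fy_g$ in $\mathcal M_3^{(2)}$, distinguish the two graphs.
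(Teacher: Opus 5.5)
Your first two steps are the paper's argument. The corollary is the tree case of the connected-graph corollary: the weighted line-graph reconstruction theorem gives $L(T)$, using (6.9) with $2\alpha_n-2\beta_n=2(n-7)$, or (6.10)--(6.11). Whitney's theorem then gives $T$, with the $K_3$ alternative excluded because the competitor is connected on $n\ge4$ vertices. This already proves the statement in its intended sense, where the competitor ranges over trees or connected graphs on $n$ vertices.

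Your third step should be dropped, because what it tries to prove is false. Allowing an arbitrary simple competitor leaves the residual case $n=4$, $T=K_{1,3}$, $T'=K_3\sqcup K_1$. The vertex count does not exclude it, since both graphs have four vertices, as you yourself note. No coefficient check can separate them either.

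At $n=4$ the representation $(2,2)$ factors through $S_4\to S_3$, the action on the three perfect matchings $12|34$, $13|24$, $14|23$. A transposition $(ij)$ maps to the transposition of $S_3$ that fixes the matching containing $ij$. So under the edge relabelling $12\mapsto12$, $13\mapsto13$, $14\mapsto23$, the elements $X_{K_{1,3}}(y)$ and $X_{K_3\sqcup K_1}(y)$ have the same image. Hence the operators coincide, and every $\mathcal M_r^{(2)}$ agrees after this relabelling of variables.

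Concretely, your proposed checks give identical answers for the two graphs:
\begin{itemize}
\item every $y_e^2$ coefficient in $\mathcal M_2^{(2)}$ equals $d=2$;
\item every $y_ey_fy_g$ coefficient in $\mathcal M_3^{(2)}$ is $0$, because $c_2=c_4=0$ at $n=4$.
\end{itemize}
So the corollary holds only for connected competitors, which is the paper's reading. You should state that hypothesis explicitly rather than attempt the extension.
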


\begin{remark}
This sharpens the earlier exceptional-case formulation: the quartic trace
is not merely a repair at $n=7$ but a uniform reconstruction invariant for
all connected simple graphs on $n\geq4$ vertices.  The remaining tree
problem is therefore entirely caused by diagonal specialization
$y_e=1$, which collapses the pairwise edge-incidence information.
\end{remark}

\section{First moments}

\subsection{The first two moments}

The trace moments of $X_G|_{W_n}$ are graph invariants:
\[
M_k^{(2)}(G):=\tr\left((X_G|_{W_n})^k\right).
\]
These are the power sums of the $(n-2,2)$-spectrum.

Let
\[
p(G)=\sum_{v=1}^n \binom{d_v}{2}
\]
be the number of unordered adjacent pairs of edges of $G$.

\begin{theorem}
Let $d=\dim(n-2,2)=n(n-3)/2$.  Then
\[
M_1^{(2)}(G)=m\,\frac{(n-3)(n-4)}2.
\tag{7.1}
\]
Moreover
\[
M_2^{(2)}(G)
=md+2p(G)\,\alpha_n+\bigl(m(m-1)-2p(G)\bigr)\beta_n,
\tag{7.2}
\]
where
\[
\alpha_n=\frac{n^2-9n+18}{2},\qquad
\beta_n=\frac{n^2-11n+32}{2}.
\]
\end{theorem}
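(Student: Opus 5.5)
The plan is to apply the universal trace formula (6.3) with $r=1$ and $r=2$, evaluating the character by Lemma (6.1). Only three cycle types occur: the identity, a transposition, and then either a $3$-cycle or a product of two disjoint transpositions. So it suffices to evaluate $\chi^{(n-2,2)}$ on these few classes and count how many words land in each.

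For $r=1$, each word is a single edge $e$, and $\pi(e)=\tau_e$ is a transposition. Then $c_1=n-2$ and $c_2=1$, so (6.1) gives
\[
\chi^{(n-2,2)}(\tau_e)=\binom{n-2}{2}+1-(n-2)=\frac{(n-2)(n-3)-2(n-3)}{2}=\frac{(n-3)(n-4)}{2}.
\]
Summing over the $m$ edges gives (7.1).

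For $r=2$, I split the $m^2$ ordered pairs $(e_1,e_2)$ into three kinds.
\begin{enumerate}[label=\textup{(\alph*)}]
\item If $e_1=e_2$, then $\pi=\mathrm{id}$ and the character value is $\dim(n-2,2)=d$; indeed (6.1) gives $\binom n2-n=d$. There are $m$ such words.
\item If $e_1\neq e_2$ share a vertex, $\pi$ is a $3$-cycle, so $c_1=n-3$ and $c_2=0$. This gives $\binom{n-3}2-(n-3)=\frac{(n-3)(n-6)}2=\frac{n^2-9n+18}{2}=\alpha_n$. The number of ordered adjacent pairs is $2p(G)$, since $p(G)=\sum_v\binom{d_v}2$ counts unordered pairs of distinct edges meeting at a vertex. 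In a simple graph two distinct edges share at most one vertex, so there is no double counting.
\item If $e_1,e_2$ are disjoint, $\pi$ has type $(2,2,1^{n-4})$, so $c_1=n-4$ and $c_2=2$. This gives $\binom{n-4}2+2-(n-4)=\frac{(n-4)(n-5)-2(n-4)+4}{2}=\frac{n^2-11n+32}{2}=\beta_n$. These are the remaining $m(m-1)-2p(G)$ ordered pairs.
\end{enumerate}
Adding the three contributions gives (7.2). This agrees with (6.9), which is just the specialization $y_e=1$ of the weighted quadratic trace, with the diagonal words $y_e^2$ contributing $d$ each.

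There is no real obstacle here. The only points needing care are the arithmetic simplifications of the binomial expressions and the justification that ordered adjacent pairs number exactly $2p(G)$, which uses simplicity of $G$.
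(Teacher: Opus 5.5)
Your proposal is correct and follows essentially the same route as the paper: expand the moments as sums of $\chi^{(n-2,2)}$ over words of edges, split the ordered pairs into equal, adjacent ($2p(G)$ of them) and disjoint, and evaluate the character on each cycle type. Your evaluation of the character via Lemma (6.1) is the same permutation-character subtraction the paper uses, and your arithmetic and counts (including the use of simplicity of $G$) all check out.
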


\begin{proof}
For an irreducible character $\chi^{(n-2,2)}$ we have
\[
M_k^{(2)}(G)=\sum_{e_1,\ldots,e_k\in E(G)}
\chi^{(n-2,2)}(e_1\cdots e_k),
\]
where an edge $ij$ is identified with the transposition $(ij)$.

For $k=1$, all summands are transpositions.  The character value of
$(n-2,2)$ on a transposition is
\[
\chi^{(n-2,2)}(2,1^{n-2})=\frac{(n-3)(n-4)}2.
\]
This gives (7.1).

For $k=2$, ordered pairs of edges split into three types.  If the two
edges are equal, their product is the identity; this contributes
$m\dim(n-2,2)=md$.  If the two edges are distinct and adjacent, their
product is a $3$-cycle.  There are $2p(G)$ such ordered pairs.  If they
are disjoint, their product has cycle type $(2,2,1^{n-4})$.  There are
$m(m-1)-2p(G)$ such ordered pairs.

It remains only to record the two character values
\[
\chi^{(n-2,2)}(3,1^{n-3})=\frac{n^2-9n+18}{2},
\]
and
\[
\chi^{(n-2,2)}(2,2,1^{n-4})=\frac{n^2-11n+32}{2}.
\]
They follow immediately by subtracting from the edge permutation
character the characters of $(n)$ and $(n-1,1)$.
\end{proof}

\begin{remark}
The quadratic moment is a useful consistency check but not yet a source
of very new information.  Indeed, the ordinary Laplacian spectrum already
determines $m$ and $p(G)$, because
\[
\tr(L_G)=2m,
\qquad
\tr(L_G^2)=\sum_v d_v^2+2m,
\]
and
\[
p(G)=\frac12\left(\sum_v d_v^2-2m\right).
\]
Thus genuinely new information in the $(n-2,2)$-spectrum should begin at
third and higher moments.
\end{remark}

\subsection{The  cubic moment}

We now compute the third moment explicitly.  This is the first 
calculation in which genuine three-edge configurations appear.

Let
\[
 t(G)
\]
be the number of triangles of $G$.  Let $s(G)$ be the number of three-edge
stars $K_{1,3}$ contained in $G$, namely
\[
s(G)=\sum_{v\in V(G)}\binom{d_v}{3}.
\]
Let $r(G)$ be the number of three-edge paths $P_4$ contained in $G$, and
let $q(G)$ be the number of three-edge subsets of $E(G)$ whose selected-edge
union is a disjoint union of a two-edge path and one isolated edge.  Finally,
let $d_3(G)$ be the number of three-edge subsets consisting of three pairwise
disjoint edges.  Throughout this paragraph, ``contained'' refers to the
subgraph determined by the selected edges; no inducedness condition is
imposed on the ambient graph $G$.
Thus
\[
\binom m3=t(G)+s(G)+r(G)+q(G)+d_3(G).
\]

For convenience put
\[
\begin{aligned}
 c_2&=\chi^{(n-2,2)}(2,1^{n-2})=\frac{(n-3)(n-4)}2,\\
 c_4&=\chi^{(n-2,2)}(4,1^{n-4})=\frac{n^2-11n+28}{2},\\
 c_{32}&=\chi^{(n-2,2)}(3,2,1^{n-5})=\frac{n^2-13n+42}{2},\\
 c_{222}&=\chi^{(n-2,2)}(2,2,2,1^{n-6})=\frac{n^2-15n+60}{2}.
\end{aligned}
\]

\begin{theorem}[Cubic trace formula]
For every graph $G$ on $n$ vertices one has
\[
\begin{aligned}
M_3^{(2)}(G)
={}&c_2\bigl(m+3m(m-1)+6t(G)\bigr) \\
&+6c_4\bigl(s(G)+r(G)\bigr)
 +6c_{32}q(G)+6c_{222}d_3(G).
\end{aligned}
\tag{7.3}
\]
Equivalently, after eliminating $d_3(G)$,
\[
\begin{aligned}
M_3^{(2)}(G)
={}&c_2\bigl(m+3m(m-1)+6t(G)\bigr)
+6c_{222}\binom m3 \\
&+6(c_4-c_{222})(s(G)+r(G))
 +6(c_{32}-c_{222})q(G)
 -6c_{222}t(G).
\end{aligned}
\tag{7.4}
\]
\end{theorem}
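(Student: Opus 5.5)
We apply the universal trace formula (6.3) with $r=3$ and split the sum over words $(e_1,e_2,e_3)\in E(G)^3$ by the pattern of repeated letters and, when all three letters are distinct, by the isomorphism type of the support graph $H(w)$. This is the support-subgraph expansion (6.4) made explicit for $r=3$. For each class we determine the cycle type of $\tau_{e_1}\tau_{e_2}\tau_{e_3}$ and the number of words in the class. We then read off the character value from Lemma (6.1), using $\chi^{(n-2,2)}(\sigma)=\binom{c_1}{2}+c_2-c_1$.

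\emph{Words with a repeated letter.}
\begin{enumerate}[label=\textup{(\alph*)}]
\item If $e_1=e_2=e_3$, the product is a single transposition. There are $m$ such words.
\item If exactly two letters coincide, say the letters are $e,e,f$ with $e\neq f$, then the three orderings give products $f$, $\tau_e\tau_f\tau_e$ and $e$. All three are transpositions, since the middle one is a conjugate of a transposition. There are $m(m-1)$ ordered choices of the repeated edge $e$ and the other edge $f$, times $3$ positions for $f$, giving $3m(m-1)$ words.
\end{enumerate}
Together these give the term $c_2(m+3m(m-1))$.

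\emph{Three distinct letters.} Each $3$-edge subset contributes $6$ words. The support is one of the five configurations counted by $t,s,r,q,d_3$.
\begin{enumerate}[label=\textup{(\alph*)}]
\item For a triangle on vertices $\{a,b,c\}$, the product is an odd permutation of $S_3$, hence a transposition, with character value $c_2$. This yields $6c_2t(G)$.
\item For $K_{1,3}$ and $P_4$, the support is a tree on $4$ vertices. We use the standard fact that the product, in any order, of the transpositions along the edges of a tree on $k$ vertices is a $k$-cycle. For $k=4$ this can also be checked by hand: for the star, $(ab)(ac)(ad)$ and its reorderings are $4$-cycles; for the path, the $6$ orderings can be checked directly, or one can use that the product is an odd permutation moving all four vertices with no invariant proper subset. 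This gives $6c_4(s+r)$.
\item For a two-edge path together with a disjoint edge, the product is a $3$-cycle times a disjoint transposition, of type $(3,2,1^{n-5})$. This gives $6c_{32}q$.
\item For three disjoint edges the product has type $(2,2,2,1^{n-6})$. This gives $6c_{222}d_3$.
\end{enumerate}

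The constants $c_4,c_{32},c_{222}$ are then computed from (6.1). For example, the $4$-cycle has $c_1=n-4$ and $c_2=0$, so
\[
c_4=\binom{n-4}{2}-(n-4)=\frac{(n-4)(n-7)}2=\frac{n^2-11n+28}2,
\]
and $c_{32},c_{222}$ are obtained the same way. Adding all contributions gives (7.3).

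Formula (7.4) follows by substituting $d_3=\binom m3-t-s-r-q$, which is the partition identity stated just before the theorem. The only step needing genuine care is the tree-product claim for $P_4$ and $K_{1,3}$; I would either cite the standard tree-factorization fact or just list the $6$ orderings for each of the two trees.
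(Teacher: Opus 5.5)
Your proposal is correct and follows the paper's own proof: both classify the ordered triples by repetition pattern and support type, identify the cycle types (a transposition in the repeated-letter and triangle cases, a $4$-cycle for $K_{1,3}$ and $P_4$, then types $(3,2,1^{n-5})$ and $(2,2,2,1^{n-6})$), and obtain (7.4) by substituting $d_3=\binom m3-t-s-r-q$. One small slip: the ordering $(f,e,e)$ gives $\tau_f$, not $\tau_e$, which does not affect the argument since all three products are transpositions anyway.
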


\begin{proof}
We expand
\[
M_3^{(2)}(G)=\sum_{e,f,h\in E(G)}\chi^{(n-2,2)}(efh),
\]
where an edge is identified with the corresponding transposition.
The ordered triples of edges are divided according to the isomorphism type
of the underlying three-edge multiset.

If all three edges are equal, or if exactly two are equal, then the product
of the three transpositions is again a transposition.  These cases give
$m+3m(m-1)$ ordered triples.  If the three distinct edges form a triangle,
the product of the three transpositions is also a transposition; this gives
$6t(G)$ further ordered triples.

If the three distinct edges form either a path $P_4$ or a star $K_{1,3}$,
their product is a $4$-cycle, independently of the order of multiplication.
This contributes $6(r(G)+s(G))$ ordered triples.  If their union is a
disjoint union of a two-edge path and one isolated edge, the product has
cycle type $(3,2,1^{n-5})$, and this contributes $6q(G)$ ordered triples.
Finally, if the three edges are pairwise disjoint, the product has cycle
type $(2,2,2,1^{n-6})$, and this contributes $6d_3(G)$ ordered triples.
This proves (7.3).  Formula (7.4) follows from
$\binom m3=t+s+r+q+d_3$.
\end{proof}

\begin{remark}
The character values used above follow from the elementary identity
\[
\chi^{(n-2,2)}(\sigma)=\binom{f_1(\sigma)}2+f_2(\sigma)-f_1(\sigma),
\]
where $f_1(\sigma)$ is the number of fixed points of $\sigma$ and
$f_2(\sigma)$ is the number of two-cycles of $\sigma$.  This identity is
obtained by subtracting the trivial and standard characters from the
permutation character of $S_n$ on two-subsets.
\end{remark}

\begin{proposition}[Reduction modulo Laplacian data]
Let
\[
\ell_j(G)=\tr(L_G^j),\qquad
p(G)=\frac{\ell_2(G)-4m}{2},
\]
and put
\[
a(G):=s(G)-t(G)
=\frac{\ell_3(G)-6\ell_2(G)+16m}{6}.
\tag{7.5}
\]
Then
\[
\begin{aligned}
M_3^{(2)}(G)={}&
 c_2\bigl(m+3m(m-1)\bigr)
 +6c_{222}\binom m3 \\
&+6(n-9)p(G)(m-2)-6(n-11)a(G)
 +12\bigl(r(G)+7t(G)\bigr).
\end{aligned}
\tag{7.6}
\]
Consequently the ordinary Laplacian spectrum together with the cubic
$(n-2,2)$-moment determines the integer
\[
r(G)+7t(G).
\tag{7.7}
\]
\end{proposition}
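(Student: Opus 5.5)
The plan is to start from the eliminated form (7.4) of the cubic trace formula and rewrite its three-edge terms using quantities determined by the Laplacian spectrum. Two ingredients are needed: a double-counting identity linking $q(G)$ to $p(G)$, and the trace identity (7.5) for $a(G)$. After that, everything reduces to simplifying polynomial coefficients in $n$.

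\emph{Step 1: the coefficients.} From the listed character values,
\[
c_4-c_{222}=2(n-8),\qquad c_{32}-c_{222}=n-9,\qquad c_2-c_{222}=4(n-6).
\]
The $t$-terms of (7.4) combine into $6(c_2-c_{222})t(G)=24(n-6)t(G)$. The remaining $c_2(m+3m(m-1))$ and $6c_{222}\binom m3$ terms already match (7.6).

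\emph{Step 2: eliminating $q(G)$.} Count pairs consisting of an unordered adjacent pair of edges and a third edge distinct from both. There are $p(G)(m-2)$ such pairs. Classify them by the type of the resulting three-edge set.
\begin{itemize}
\item A triangle contains $3$ adjacent pairs.
\item A star $K_{1,3}$ contains $3$ adjacent pairs.
\item A path $P_4$ contains $2$ adjacent pairs.
\item A path-plus-disjoint-edge configuration contains exactly $1$ adjacent pair.
\item Three pairwise disjoint edges contain none.
\end{itemize}
Hence
\[
p(G)(m-2)=q(G)+3t(G)+3s(G)+2r(G).
\]
Substitute $q=p(m-2)-3t-3s-2r$ into (7.4). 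Collecting terms gives the following coefficients:
\begin{itemize}
\item $s$: $12(n-8)-18(n-9)=-6(n-11)$;
\item $r$: $12(n-8)-12(n-9)=12$;
\item $t$: $-18(n-9)+24(n-6)=6n+18$.
\end{itemize}
Now write $s=a+t$. The $t$-coefficient becomes $6n+18-6(n-11)=84=12\cdot7$, which produces exactly (7.6).

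\emph{Step 3: the Laplacian identities.} We still need to check (7.5) and the formula for $p$. Expand
\[
\tr\bigl((D-A)^3\bigr)=\tr D^3-3\tr(D^2A)+3\tr(DA^2)-\tr A^3 .
\]
Here $\tr(D^2A)=0$, $\tr(DA^2)=\sum_v d_v^2$, and $\tr A^3=6t$. This gives $\ell_3=\sum d_v^3+3\sum d_v^2-6t$. Together with $\ell_2=\sum d_v^2+2m$ and
\[
6s=\sum d_v^3-3\sum d_v^2+2\sum d_v,\qquad \sum d_v=2m,
\]
a direct computation gives $\ell_3-6\ell_2+16m=6s-6t$. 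The formula $p=(\ell_2-4m)/2$ follows similarly from $2p=\sum d_v^2-2m$.

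\emph{Step 4: the conclusion.} The Laplacian spectrum determines $n$, $m$ and all $\ell_j$, and therefore $p(G)$ and $a(G)$. Solving (7.6) for $12(r+7t)$ then yields (7.7).

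The step most prone to error is the coefficient bookkeeping in Step 2, in particular the multiplicities $3,3,2,1$ in the double count. I would re-check these multiplicities, and the exact value $84$, on a small graph such as $K_4$ or a path.
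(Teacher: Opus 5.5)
Your proposal is correct and follows essentially the same route as the paper: you verify (7.5) from $\tr(L_G^3)=\sum_v d_v^3+3\sum_v d_v^2-6t(G)$ and $s(G)=\sum_v\binom{d_v}{3}$, eliminate $q(G)$ from (7.4) via the double count $p(G)(m-2)=3t(G)+3s(G)+2r(G)+q(G)$, and substitute $s=a+t$ to obtain the coefficient $84=12\cdot 7$. Your intermediate coefficients $-6(n-11)$, $12$ and $6(n+3)$ for $s$, $r$ and $t$ agree exactly with the paper's.
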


\begin{proof}
First,
\[
\tr(L_G^3)=\sum_v d_v^3+3\sum_vd_v^2-6t(G),
\]
while
\[
s(G)=\sum_v\binom{d_v}{3}
=\frac{1}{6}\left(\sum_vd_v^3-3\sum_vd_v^2+4m\right).
\]
Together with $\ell_2=\sum_vd_v^2+2m$, this gives (7.5).

Next count pairs consisting of an adjacent unordered pair of edges and a
third distinct edge.  A triangle and a claw each contribute $3$, a
three-edge path contributes $2$, a copy of $P_3\sqcup K_2$ contributes $1$,
and a three-matching contributes $0$.  Hence
\[
p(G)(m-2)=3t(G)+3s(G)+2r(G)+q(G).
\tag{7.8}
\]
Use (7.8) to eliminate $q(G)$ from (7.4), and then use
$d_3=\binom m3-t-s-r-q$ as already incorporated there.  Since
\[
c_4-c_{222}=2n-16,\qquad
c_{32}-c_{222}=n-9,
\]
the terms depending on $s,r,t$ simplify to
\[
-6(n-11)s+12r+6(n+3)t.
\]
Writing $s=a+t$ turns this into
\[
-6(n-11)a+12(r+7t),
\]
which proves (7.6).  All quantities in (7.6) except $r+7t$ are determined
by the Laplacian spectrum and $M_3^{(2)}(G)$, proving (7.7).
\end{proof}

\section{What the moments say for trees}

For a tree $T$ on $n$ vertices, $m=n-1$ and $t(T)=0$.  The first moment is
therefore fixed by $n$, while the second is equivalent to
\[
p(T)=\sum_{v\in V(T)}\binom{d_v}{2},
\]
which is already determined by the Laplacian spectrum.  The preceding
proposition gives a sharper description of what first appears at degree
three.

Let $\ell_j=\tr(L_T^j)$.  Since $t(T)=0$, the number of claws is already
Laplacian-spectral:
\[
s(T)=\sum_v\binom{d_v}{3}
=\frac{\ell_3-6\ell_2+16(n-1)}{6}.
\tag{8.1}
\]
On the other hand, (7.6) shows that the additional cubic
$(n-2,2)$-moment determines
\[
r(T),
\tag{8.2}
\]
the number of three-edge paths $P_4$.  Thus, relative to the ordinary
Laplacian spectrum, the first new cubic statistic has the particularly
simple interpretation ``number of length-three paths.''

In fact all three-edge support-forest counts are then recovered separately.
Indeed,
\[
p(T)=\frac{\ell_2-4(n-1)}{2},
\]
and the incidence identity (7.8) becomes
\[
q(T)=p(T)(n-3)-3s(T)-2r(T),
\tag{8.3}
\]
while
\[
d_3(T)=\binom{n-1}{3}-s(T)-r(T)-q(T).
\tag{8.4}
\]
We have therefore proved the following.

\begin{theorem}[Complete cubic inversion for trees]
For every tree $T$ on $n\geq4$ vertices, the pair
\[
\left(\Spec(L_T),\ M_3^{(2)}(T)\right)
\]
determines separately the numbers of all four three-edge support forests:
\[
K_{1,3},\qquad P_4,\qquad P_3\sqcup K_2,\qquad 3K_2.
\]
Equivalently, the pair of spectra
\[
\left(\Spec(L_T),\Spec_{(n-2,2)}(T)\right)
\]
determines these four counts individually.
\end{theorem}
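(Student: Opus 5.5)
The plan is to assemble the theorem directly from the reduction proposition (7.6) and the Laplacian trace identities, specialised to trees. First I will extract everything the Laplacian spectrum gives on its own. Since $n$ is the size of the spectrum, $m=n-1$ is known, and so are $\ell_2=\tr(L_T^2)$ and $\ell_3=\tr(L_T^3)$.

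From these, $p(T)=(\ell_2-4(n-1))/2$ is determined. Because a tree has no triangles, $t(T)=0$, and then (7.5) gives $a(T)=s(T)$. Hence the claw count $s(T)$ is determined by (8.1). This already settles the $K_{1,3}$ count using the Laplacian data alone.

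Next I will solve (7.6) for $r(T)$. With $t=0$, (7.6) reads
\[
M_3^{(2)}(T)=c_2\bigl(m+3m(m-1)\bigr)+6c_{222}\tbinom m3+6(n-9)p(T)(m-2)-6(n-11)s(T)+12\,r(T).
\]
Every term except $12r(T)$ is a function of $n$, $\ell_2$ and $\ell_3$. Dividing by $12$ therefore expresses $r(T)$ explicitly in terms of $M_3^{(2)}(T)$ and the Laplacian data.

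I will then obtain the remaining two counts from linear identities. The incidence identity (7.8) with $t=0$ and $m=n-1$ gives $q(T)$ by (8.3). The partition $\binom{m}{3}=s+r+q+d_3$, which holds because a tree has no triangles, then gives $d_3(T)$ by (8.4). This yields all four counts for $K_{1,3}$, $P_4$, $P_3\sqcup K_2$ and $3K_2$.

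For the equivalent formulation, I only need that $\Spec_{(n-2,2)}(T)$ determines $M_3^{(2)}(T)$ as the third power sum of its eigenvalues. In the other direction, $\Spec(L_T)$ is part of the data in both formulations.

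The main obstacle is confirming that (7.6) is applicable for every $n\ge4$ with a coefficient of $r$ that is a nonzero constant. The coefficient is $12$, uniformly in $n$, so this holds. For small $n$ the counts for $3K_2$ (or $P_3\sqcup K_2$) may simply vanish. This causes no difficulty, because the character values $c_{32}$ and $c_{222}$ enter only through identities that remain valid formally: the corresponding configuration counts are zero in those cases. I will check $n=4,5$ briefly to ensure no degenerate case breaks the formulas.
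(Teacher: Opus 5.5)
Your proposal is correct and follows the paper's proof essentially step for step. The Laplacian spectrum gives $p(T)$ and $s(T)$, the latter via (7.5)/(8.1) using $t(T)=0$; the reduction (7.6), whose coefficient $12$ on $r+7t$ is uniform in $n$, then yields $r(T)$; and (8.3)--(8.4) give $q(T)$ and $d_3(T)$. Your remark that the formal values $c_{32},c_{222}$ at small $n$ are harmless, because they multiply vanishing counts and (7.6) is derived only through identities valid for all graphs, makes explicit a point the paper leaves implicit.
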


\begin{proof}
The Laplacian spectrum determines $s(T)$ and $p(T)$ by (8.1) and the
quadratic formula above.  Proposition 7.5--7.7 determines $r(T)$ because
$t(T)=0$.  Equations (8.3) and (8.4) then determine $q(T)$ and $d_3(T)$.
\end{proof}

The general trace formula gives a systematic continuation of this pattern.
For each $r$, the moment $M_r^{(2)}(T)$ is a universal linear combination
of embedded support-forest counts.  Thus the degree-three inversion problem
is completely solved: the first unresolved inversion step begins with
four-edge support forests.  The full $(n-2,2)$-spectrum gives the finite
family of power sums
\[
M_r^{(2)}(T),\qquad 1\le r\le\dim W_n=\frac{n(n-3)}2,
\]
and therefore a hierarchy of universal constraints on larger support
forests.

\begin{definition}
The \emph{$(n-2,2)$ support-forest profile} of a tree $T$ is the collection of
universal support-forest-counting quantities obtained from the moments
$M_r^{(2)}(T)$ for $1\le r\le \dim W_n$.
\end{definition}

The preceding discussion can be summarized as follows.

\begin{proposition}
For trees, the $(n-2,2)$-spectrum determines the $(n-2,2)$ support-forest
profile.  Relative to the ordinary Laplacian spectrum, the first new entry
appears at degree three and determines the number of three-edge paths
$P_4$ exactly.  Consequently all three-edge support-forest counts are
individually determined by the combined Laplacian and $(n-2,2)$ spectra.
\end{proposition}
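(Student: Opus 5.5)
The plan is to assemble three earlier results. The first claim is that the $(n-2,2)$-spectrum determines the support-forest profile. The spectrum of $X_T|_{W_n}$ is a multiset of $d=\dim W_n=n(n-3)/2$ numbers, so it determines every power sum $M_r^{(2)}(T)=\tr\bigl((X_T|_{W_n})^r\bigr)$, in particular those with $1\le r\le d$. By the tree support-forest expansion (6.5), each $M_r^{(2)}(T)$ equals the universal linear combination $\sum_{\mathcal F}N_{\mathcal F}(T)\,C_{r,\mathcal F}(n)$. The coefficients $C_{r,\mathcal F}(n)$ depend only on $r$, $n$ and $\mathcal F$, so the value of each such combination is a function of the spectrum. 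By definition, the profile is exactly the collection of these values. I would also note the converse: by Newton's identities the power sums $M_1^{(2)},\dots,M_d^{(2)}$ recover the spectrum, so spectrum and profile carry the same information.

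Next I would check that nothing new appears before degree three. For a tree, $m=n-1$, so (7.1) gives $M_1^{(2)}(T)$ as a function of $n$ alone. By (7.2), $M_2^{(2)}(T)$ is an affine function of $p(T)$. The remark after (7.2) shows that $p(T)$ is already fixed by $\tr(L_T)$ and $\tr(L_T^2)$. Hence, relative to $\Spec(L_T)$, degrees one and two carry no new information.

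For degree three I would invoke Proposition (7.6) with $t(T)=0$. Every term on the right of (7.6) except $12\,r(T)$ is determined by $n$, $m=n-1$, $p(T)$ and $a(T)=s(T)$, and the last two are Laplacian-spectral by (7.5) and (8.1). So $M_3^{(2)}(T)$ together with $\Spec(L_T)$ yields $r(T)$ exactly. Finally, (8.3) gives $q(T)$ and (8.4) gives $d_3(T)$, which is precisely the Complete cubic inversion theorem.

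I expect no real obstacle, since every step quotes an earlier formula. The only point needing care is the phrase "first new entry appears at degree three." I would read it as: the degree-one and degree-two entries of the profile are functions of $\Spec(L_T)$, while the degree-three entry is not in general. For the strict claim that $r(T)$ is not Laplacian-spectral, I would have to exhibit two Laplacian-cospectral trees with different $P_4$-counts. If I were not content to leave that as an interpretive remark, I would first try a small computer search over trees with about $10$ to $12$ vertices.
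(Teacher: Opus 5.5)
Your proposal is correct and follows the paper's own argument. The spectrum fixes the power sums $M_r^{(2)}(T)$ and hence, via (6.5), the profile; (7.1)--(7.2) together with the Laplacian identities show that degrees one and two add nothing new; (7.6) with $t(T)=0$ yields $r(T)$; and (8.3)--(8.4) then give $q(T)$ and $d_3(T)$. The paper also proves ``first new entry appears at degree three'' only in your weak sense and never exhibits Laplacian-cospectral trees with different $P_4$-counts, so your caveat points to something the paper leaves unproved, not a gap in your argument.
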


\section{Trees and graph isomorphism}\label{sec:9} 

Classically, cospectrality is abundant among trees; in particular Schwenk proved that almost all trees are adjacency-cospectral \cite{Schwenk}.  The motivating question here is whether the pair
\[
\left(
\Spec_{(n-1,1)}(G),
\Spec_{(n-2,2)}(G)
\right)
\]
distinguishes trees.  The trace formula makes this question much more
concrete than a bare spectral conjecture.  For a tree, the moments of
$X_T|_{W_n}$ are not mysterious spectral quantities: they are universal
linear combinations of embedded support-forest counts.

Thus the conjecture can be reformulated as a support-forest-counting assertion.
The Laplacian spectrum gives the usual vertex-level spectral data, while
the $(n-2,2)$-spectrum gives a hierarchy of edge-subtree and support-forest statistics.
The first terms are as follows.

\begin{itemize}[leftmargin=2em]
\item $M_1^{(2)}(T)$ gives no information beyond $n$.
\item $M_2^{(2)}(T)$ gives the number of two-edge paths
$\sum_v\binom{d_v}{2}$, already determined by the Laplacian spectrum.
\item Relative to the Laplacian spectrum, $M_3^{(2)}(T)$ determines
$r(T)$, the number of length-three paths $P_4$, exactly.  Since the
Laplacian spectrum already determines the claw count $s(T)$, the incidence
identities (8.3)--(8.4) then recover the two disconnected three-edge forest
counts as well.
\item In general, $M_r^{(2)}(T)$ gives a universal linear combination of
embedded support forests with at most $r$ edges.
\end{itemize}

This leads to the following more explicit version of the tree conjecture.

\begin{conjecture}[Spectral support-forest-profile conjecture]
Let $T_1$ and $T_2$ be trees on $n$ vertices.  If $T_1$ and $T_2$ have the
same Laplacian spectrum and the same $(n-2,2)$ support-forest profile, then
$T_1$ and $T_2$ are isomorphic.
Equivalently, no two non-isomorphic trees have identical $(n-1,1)$- and
$(n-2,2)$-spectra.
\end{conjecture}

A still stronger, but perhaps more approachable, form is the following.

\begin{conjecture}[Subtree separation form]
The universal support-subgraph-counting combinations arising from
$M_r^{(2)}(T)$, together with the Laplacian spectrum, determine all
embedded forest counts of $T$.  In particular they distinguish trees.
\end{conjecture}

The second formulation is useful because it gives a concrete program:
one can try to invert, degree by degree, the linear transformation from
support-forest counts to trace moments after adjoining the Laplacian spectral
data.  The degree-three step is now completely invertible by the cubic
inversion theorem.  Thus the first genuinely open inversion problem occurs
for four-edge support forests.  An explicit fourth-moment formula and its
reduction modulo lower-degree data is the natural next step.

\section{Preliminary computations and Godsil--McKay switching}

We briefly record two computational observations which support the usefulness
of the $(n-2,2)$-spectrum as a refinement of the ordinary Laplacian spectrum.
These computations are not meant to replace the structural conjectures above,
but they indicate that the invariant is sensitive to phenomena invisible to
classical spectra.

\subsection{Laplacian-cospectral trees}

We enumerated non-isomorphic trees on $n$ vertices and grouped them by their
ordinary Laplacian spectrum.  Inside each Laplacian-cospectral class we then
computed the spectrum of $X_T|_{W_n}$.  In the tested range, every
Laplacian-cospectral tree class was split by the $(n-2,2)$-spectrum.

\begin{center}
\small
\begin{tabular}{c|c|c|c|c}
$n$ & number of trees & Laplacian-cospectral classes & trees in these classes & unresolved by $(n-2,2)$ \\
\hline
$2\le n\le 10$ & -- & $0$ & $0$ & $0$ \\
$11$ & $235$ & $3$ & $6$ & $0$ \\
$12$ & $551$ & $3$ & $6$ & $0$ \\
$13$ & $1301$ & $9$ & $18$ & $0$ \\
$14$ & $3159$ & $15$ & $30$ & $0$ \\
$15$ & $7741$ & $24$ & $48$ & $0$
\end{tabular}
\end{center}

Thus, at least in this range, the pair
\[
\left(\Spec(L_T),\Spec(X_T|_{W_n})\right)
\]
distinguishes all trees.  In terms of moments, this says that for every pair
of Laplacian-cospectral trees found in this range, some universal support-forest-count
combination coming from the traces $M_r^{(2)}(T)$ separates the pair.
This gives a concrete computational form of the support-forest-profile conjecture.

\subsection{Godsil--McKay switching}

Godsil--McKay switching is a standard source of adjacency-cospectral graphs \cite{GodsilMcKay}; see also the enumeration work of Haemers and Spence \cite{HaemersSpence}.
The $(n-2,2)$-spectrum is not preserved by this operation in general.
This is already true in a regular example, hence not merely a consequence of
changed degrees.

Consider the following two $4$-regular graphs on $10$ vertices, written in
the standard \texttt{graph6} format used by nauty/Traces \cite{McKayPiperno}:
\[
\texttt{Ir\_GYkuy?},\qquad \texttt{I]HTOYRRO}.
\]
They are related by a Godsil--McKay switch.  They are non-isomorphic and have
the same adjacency spectrum.  Since they are regular, they also have the same
Laplacian spectrum.  However their $(n-2,2)$-spectra are different.  Equivalently,
their traces agree for the first few low moments but eventually separate:
\[
M_r^{(2)}(G)=M_r^{(2)}(G'),\qquad r=1,2,3,4,5,
\]
while
\[
M_6^{(2)}(G)=175466984,
\qquad
M_6^{(2)}(G')=175467176.
\]

This example is important conceptually.  It shows that the new spectrum is not
just another form of the adjacency or Laplacian spectrum.  It can detect
higher-order edge-interaction information which survives classical cospectral
constructions.

\begin{problem}
Classify Godsil--McKay switches which preserve the $(n-2,2)$-spectrum.  More
generally, determine which switching constructions preserve all moments
$M_r^{(2)}$ and which are detected by the first non-vanishing difference.
\end{problem}

\section{Relation with Thi\'ery's system of parameters}

Thi\'ery \cite{Thiery} conjectured the existence of a low-degree homogeneous system of
parameters for the invariant ring
\[
I_n=\C[x_{ij}:1\le i<j\le n]^{S_n}.
\]
His proposed parameters are built from two families: symmetric power sums
in all edge variables $x_{ij}$ and symmetric power sums in the star
variables
\[
X_i=\sum_{j\ne i}x_{ij}.
\]
This exactly reflects the decomposition
\[
V_n=(n-2,2)\oplus\operatorname{span}\{E_1,\ldots,E_n\}.
\]

The present spectral construction does not by itself prove Thi\'ery's
conjecture.  However it suggests a complementary family of invariants:
the coefficients of
\[
\det\left(tI-X_G|_{W_n}\right).
\]
These are polynomial $S_n$-invariants of degrees
$1,\ldots,\dim W_n$.  They are not expected to generate the full invariant
ring, but they give a natural finite collection of invariants attached to
the missing component $(n-2,2)$.

\begin{problem}
Determine whether a suitable subfamily of the natural invariants
\begin{itemize}[leftmargin=2em]
\item the edge-count invariant,
\item the nontrivial Laplacian spectral invariants, equivalently the standard
component $(n-1,1)$,
\item the characteristic coefficients of $X_G|_{W_n}$,
\end{itemize}
forms a homogeneous system of parameters for $I_n$, or whether the full family
at least separates a large and natural class of graphs such as trees.
\end{problem}

This problem is weaker than full graph isomorphism but strong enough to
connect the present approach with invariant theory.

\section{Outlook}

Several questions remain open.

\begin{enumerate}[label=\arabic*.]

\item Compute an explicit closed formula for the unweighted fourth moment and invert it, modulo Laplacian and cubic data, on the finite list of four-edge support forests.  The degree-three inversion is complete by the cubic inversion theorem above.

\item Determine exactly which classical graph statistics are determined by
$\Spec_{(n-2,2)}(G)$ together with the Laplacian spectrum.

\item Compare $X_G|_{W_n}$ with edge Laplacians, line graph operators and
Hodge-theoretic decompositions on graphs.

\item Study higher partitions such as
\[
(n-3,3),\qquad (n-3,2,1),\qquad (n-4,4).
\]

\item Investigate the parallel problem for spectra of Cayley graphs of
$S_n$ generated by transpositions, in the direction of
\cite{Cesi,CaputoLiggettRichthammer}.  This is closely related but should be
developed separately.

\end{enumerate}

\medskip\noindent
\emph{Acknowledgements.} The author is grateful to Dmitrii Pasechnik for discussions of this topic about a decade ago.

\end{document}